\newtheorem{theorem}{Theorem}[section]
\newtheorem{corollary}[theorem]{Corollary}
\newtheorem{prop}{Proposition}[section]
\newtheorem{rema}[prop]{Remark}
\makeatletter \@addtoreset{equation}{section} \makeatother
\def\ppt{\frac{\partial}{\partial t}}
\def\p{\partial}
\def\({\left(}
\def\){\right)}
\begin{document}

\title{Differential Harnack Estimates for
Parabolic Equations}

\author{Xiaodong Cao $^*$ }
\thanks{$^*$ Research
partially supported by NSF grant DMS~0904432.}

\address{Department of Mathematics,
 Cornell University, Ithaca, NY 14853-4201, USA.}
\email{cao@math.cornell.edu}

\author{Zhou Zhang $^\dagger$
}
\thanks{$^\dagger$ Research
partially supported by NSF grant DMS 0904760.}

\address{Department of Mathematics, University of
Michigan, at Ann Arbor, MI 48109, USA.} \email{zhangou@umich.edu}

\renewcommand{\subjclassname}{%
  \textup{2000} Mathematics Subject Classification}
\subjclass[2000]{Primary 53C44}

\date{Jan. 8, 2010}

\maketitle

\markboth{Xiaodong Cao, Zhou Zhang} {Differential Harnack
Estimates for Parabolic Equations}

\begin{abstract}  Let $(M,g(t))$ be a
solution to the Ricci flow on a closed Riemannian manifold. In
this paper, we prove differential Harnack inequalities for
positive solutions of nonlinear parabolic equations of the type
$$\ppt f=\Delta f-f \ln f +Rf.$$ We also comment on an earlier result
of the first author on positive solutions of the conjugate heat
equation under the Ricci flow.
\end{abstract}

\section{\textbf{Introduction}}

Let $(M,g(t))$, $t\in [0, T)$, be a solution to the Ricci flow on
a closed manifold $M$. In the first part of this paper, we deal
with positive solutions of nonlinear parabolic equations on $M$.
We establish Li-Yau type differential Harnack inequalities for
such positive solutions. To be more precisely, $g(t)$ evolves
under the Ricci flow
\begin{equation}
\frac{\partial g(t)}{\partial t}= -2Rc, \label{rf}
\end{equation} where $Rc$ denote the Ricci
curvature of $g(t)$. We first assume that the initial metric
$g(0)$ has nonnegative curvature operator, which implies that for
all time $t\in [0,T)$, $g(t)$ has nonnegative curvature operator
(c.f. \cite{HPCO}). Consider a positive function $f(x,t)$ defined
on $M\times [0,T)$, which solves the following nonlinear parabolic
equation,
\begin{equation}
\label{heat}
\frac{\partial f}{\partial t}=\triangle f-f\ln f +Rf.
\end{equation}
The symbol $\triangle$ here stands for the Laplacian of the
evolving metric $g(t)$ and $R$ is the scalar curvature of $g(t)$.
For simplicity, we omit $g(t)$ in the above notations. All
geometry
operators are with respect to the evolving metric $g(t)$.\\

Differential Harnack inequalities were originated by P. Li and
S.-T. Yau in \cite{ly86} for positive solutions of the heat
equation (therefore it is also known as Li-Yau type Harnack
estimates). The technique was then brought into the study of
geometric evolution equation by R. Hamilton (for example, see
\cite{harnack}) and ever since has been playing an important role
in the study of geometric flows. Applications include estimates on
the heat kernel; curvature growth control; understanding the
ancient solutions for geometric flows; proving noncollapsing
result in the Ricci flow
(\cite{perelman02}); etc. See \cite{ni08} for a recent survey
on this subject by L. Ni.\\


Using maximum principle, one can see that the solution for
(\ref{heat}) will remain positive along the flow.  It exists as
long as the solution for (\ref {rf}) exists. The study of the
Ricci flow coupled with a heat-type (or backward heat-type)
equation started from R. Hamilton's paper \cite{Hsurvey}.
Recently, there has been some interesting study on this topic. In
\cite{perelman02}, G. Perelman proved a differential Harnack
inequality for the fundamental solution of the conjugate heat
equation under the Ricci flow. In \cite{c08}, the first author
proved a differential Harnack inequality for general positive
solutions of the conjugate heat equation, which was also proved
independently by S. Kuang and Q. S. Zhang in \cite{kz08}. For the
Ricci flow coupled with the heat equation, the study was pursued
in \cite{ch2008, guenther02,ni04,zhang06}. Various estimates are
obtained recently by M. Bailesteanu, A. Pulemotov and the first
author in \cite{bcp09}, and by S. Liu in \cite{liu09}. For
nonlinear parabolic equations under the Ricci flow, local gradient
estimates for positive solutions of equation
$$\ppt f=\Delta f+af \ln f +b f,$$ where $a$ and $b$ are
constants,
has been studied by Y. Yang in \cite {yang2008}. For general
evolving metrics, similar estimate has been obtained  by A. Chau,
L.-F. Tam and C. Yu in \cite{cty2007}, by S.-Y. Hsu in
\cite{hsu2008}, and by J. Sun in \cite{sun09}.  In \cite{ma2006},
L. Ma proved a gradient estimate for the elliptic equation
$$\Delta f+af \ln f +bf=0.$$

\vspace{0.1in} \noindent In (\ref{heat}), if one defines
$$u(x,t)=-\ln f(x,t),$$
then the function $u=u(x,t)$ satisfies the evolution
equation
\begin{align}
\label{heat2}  \frac{\partial u}{\partial t}=
\Delta u- |\nabla u|^2-R-u.
\end{align}
The computation from (\ref{heat}) to (\ref{heat2}) is standard,
which
also gives the explicit relation between these two equations.\\


Our motivation to study (\ref{heat}) under the Ricci flow comes
from the geometric interpretation of (\ref{heat2}), which arises
from the study of expanding Ricci solitons. Recall that if we are
on a gradient expanding Ricci soliton $(M, g)$ satisfying
$$R_{ij}+\nabla_i \nabla_j w=-\frac14 g_{ij},$$
where $w$ is called soliton potential function, then we have
$$R(g)+\Delta_g w=-\frac{n}{4}.$$
In sight of this, by taking covariant derivative for the soliton
equation and applying the second Bianchi identity, one can see
that
$$R(g)+|\nabla_g w|_g^2+\frac{w}{2}=\mbox{constant}.$$
Also notice that the Ricci soliton potential function $w$ can be
differed by a constant in the above equations. So by choosing this
constant carefully, we have
$$R(g)+|\nabla_g w|_g^2=-\frac{w}{2}-\frac{n}{8}.$$
One consequence of the above identities
is the following
\begin{equation}
\label{intuition}
|\nabla_g w|_g^2=\Delta_g w- |\nabla_g w|_g^2-
R(g)-w.
\end{equation}\\

Recall that the Ricci flow solution for an expanding soliton is
$g(t)=c(t)\cdot\phi(t)^*g$ (c.f. \cite{chowetc1}), where $c(t)=1+
\frac{t}{2}$ and the family of diffeomorphism $\phi(t)$ satisfies,
for any $x\in M$,
$$\frac{\partial }{\partial t}(\phi(t)(x))=\frac{1}{c(t)}\cdot(\nabla
_g w)\(\phi(t)(x)\).$$ Thus the corresponding Ricci soliton
potential $\phi(t)^*w$ satisfies
$$\frac{\p \phi(t)^*w}{\p t}(x)=\frac{1}{c(t)}(\nabla_g
w)(w)\(\phi(t)(x)\)=|\nabla\phi(t)^*w|^2(x).$$ Along the Ricci
flow, (\ref{intuition}) becomes
$$|\nabla \phi^*w|^2=\Delta \phi^*w- |\nabla \phi^*w
|^2-R-\frac{\phi^*w}{c(t)}.$$ Hence the evolution equation for the
Ricci soliton potential is \begin{equation} \frac{\p\phi(t)^*w}{\p
t}=\Delta \phi^*w- |\nabla
\phi^*w|^2-R-\frac{\phi^*w}{c(t)}.\end{equation} The second
nonlinear parabolic equation that we investigate in this paper is
\begin{align}
\label{heat3}  \frac{\partial u}{\partial t}=\Delta u-|\nabla
u|^2-R-\frac{u}{1+\frac{t}{2}}.
\end{align}
Notice that (\ref{heat2}) and (\ref{heat3}) are closely related
and only differ by their last terms.\\


Our first result deals with (\ref{heat}) and (\ref{heat2}).
\begin{theorem}
\label{theorem1.1} Let $(M, g(t))$, $t\in [0, T)$, be a solution
to the Ricci flow on a closed manifold, and suppose that $g(0)$
{\em (}and so $g(t)${\em )} has weakly positive curvature
operator. Let $f$ be a positive solution to the heat equation {\em
(\ref{heat})}, $u=-\ln f$ and
\begin{equation}\label{defH}
H=2 \triangle u-|\nabla u|^2-3R-\frac{2n} {t}.
\end{equation} Then
for all time $t\in (0,T)$
$$H\leqslant\frac{n}{4}.$$
\end{theorem}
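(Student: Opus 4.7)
The plan is to apply the parabolic maximum principle to $H$ on $M\times(0,T)$. Working with $u=-\ln f$, which satisfies (\ref{heat2}), and using that $M$ is closed so that $\Delta u$, $|\nabla u|^2$ and $R$ remain bounded on any slab $[\varepsilon,T']\subset(0,T)$, the correction term $-2n/t$ forces $H\to-\infty$ as $t\to 0^+$. Hence, if the conclusion failed, then for every sufficiently small $\epsilon>0$ there would be a first time $t_0>0$ and a point $x_0\in M$ at which $H=n/4+\epsilon$, $\partial_t H\geq 0$, $\nabla H=0$ and $\Delta H\leq 0$. The goal is then to derive a contradiction by showing $(\partial_t-\Delta)H<0$ at such a point.

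The main computation is the evolution of $H$. I would apply $(\partial_t-\Delta)$ separately to $\Delta u$, $|\nabla u|^2$ and $R$, using the Bochner formula
$$\Delta|\nabla u|^2 = 2|\nabla^2 u|^2 + 2\langle\nabla u,\nabla\Delta u\rangle + 2\mathrm{Rc}(\nabla u,\nabla u),$$
the Ricci-flow identities $\partial_t g^{ij}=2R^{ij}$ and $\partial_t R=\Delta R+2|\mathrm{Rc}|^2$, the commutator $[\partial_t,\Delta]\varphi = 2\langle\mathrm{Rc},\nabla^2\varphi\rangle$ for scalars, and one and two spatial derivatives of (\ref{heat2}). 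Combining the pieces with coefficients $2,-1,-3$ from (\ref{defH}) and using $\nabla H=0$ to replace $\langle\nabla u,\nabla\Delta u\rangle$, I expect the Hessian--Ricci cross-terms to assemble into the square $-2|\nabla^2 u-\mathrm{Rc}|^2$ together with a manifestly nonpositive remainder $-4|\mathrm{Rc}|^2 - 4\mathrm{Rc}(\nabla u,\nabla u)$. The $-u$ term in (\ref{heat2}), absent from the classical Li--Yau setup for the linear heat equation, contributes an extra $-2\Delta u + 2|\nabla u|^2$ to $(\partial_t-\Delta)H$.

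The Cauchy--Schwarz inequality
$$|\nabla^2 u-\mathrm{Rc}|^2 \geq \frac{(\Delta u-R)^2}{n}$$
is then used to dominate the Hessian square. Via the defining relation $H = 2\Delta u - |\nabla u|^2 - 3R - 2n/t$, one has $\Delta u - R = \tfrac{1}{2}(H + |\nabla u|^2 + R + 2n/t)$, so the resulting expression for $(\partial_t-\Delta)H$ becomes a polynomial in $H$, $|\nabla u|^2$, $R$ and $1/t$. The $1/t^2$ contribution from $\partial_t(-2n/t)$ and the $1/t^2$ contribution from expanding the Cauchy--Schwarz square should balance exactly, so that after invoking the sign constraints $R\geq 0$, $\mathrm{Rc}\geq 0$ and $|\mathrm{Rc}|^2\geq R^2/n$ (all available from the weakly positive curvature operator, preserved by the Ricci flow as cited in the introduction), the remainder collapses to a nonpositive quadratic in $H-n/4$. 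Any $\nabla R$ or $\Delta R$ terms that survive the matching can be absorbed through standard Harnack identities for the scalar curvature under Ricci flow.

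The main obstacle, in my view, is the algebraic bookkeeping that forces the critical constant to be exactly $n/4$. The coefficients $(2,-1,-3)$ in the definition of $H$, the extra $-u$ contribution from (\ref{heat2}), the $2n/t^2$ from differentiating the time cutoff, and the $-2/n$ from Cauchy--Schwarz must conspire so that the only way to have $(\partial_t-\Delta)H\geq 0$ together with $\nabla H=0$ and $\Delta H\leq 0$ at the putative maximum is $H\leq n/4$. Once this evolution inequality is in hand, the contradiction at $(x_0,t_0)$ is immediate, and letting $\epsilon\to 0^+$ yields the desired global bound on $M\times(0,T)$.
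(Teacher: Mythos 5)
Your outline follows the same strategy as the paper's own proof: a maximum principle anchored by $H\to-\infty$ as $t\to0^{+}$, the evolution inequality for $H$ via the Bochner formula and the Ricci-flow commutator identities, Cauchy--Schwarz on the Hessian square, and a completion of the square to absorb the positive $|\nabla u|^{2}$ contribution created by the $-u$ term in (\ref{heat2}). Several delicate points are correctly anticipated, in particular the exact cancellation of the two $2n/t^{2}$ contributions and the substitution $-2\Delta u+2|\nabla u|^{2}=-H+|\nabla u|^{2}-3R-\frac{2n}{t}$. Carried through, this yields
$$\ppt H\leqslant \triangle H-2\nabla H\cdot\nabla u-\Bigl(\frac{2}{t}+2\Bigr)H-\frac{2}{t}|\nabla u|^{2}-4R-\frac{2n}{t}+\frac{n}{2},$$
and at an interior maximum the conclusion is the \emph{linear} inequality $(\frac{2}{t_{0}}+2)H\leqslant\frac{n}{2}-\frac{2n}{t_{0}}$, i.e.\ $H\leqslant\frac{n}{4}\cdot\frac{t_{0}-4}{t_{0}+1}<\frac{n}{4}$, rather than a nonpositive quadratic in $H-n/4$; the constant $n/4$ arises as the supremum of the right-hand side, with the $\frac{n}{2}$ coming from $\max_{X}(-\frac{2}{n}X^{2}+2X)=\frac{n}{2}$ in the completed square.

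There is, however, one genuine gap: the treatment of the curvature terms. The correct evolution is
$$\ppt H=\triangle H-2\nabla H\cdot\nabla u-2|u_{ij}-R_{ij}|^{2}-2\Bigl(\ppt R+2\nabla R\cdot\nabla u+2R_{ij}u_{i}u_{j}\Bigr)-2\Delta u+2|\nabla u|^{2}+\frac{2n}{t^{2}},$$
so besides $-2|\nabla^{2}u-\mathrm{Rc}|^{2}$ the remainder is $-4|\mathrm{Rc}|^{2}-4\mathrm{Rc}(\nabla u,\nabla u)-2\Delta R-4\nabla R\cdot\nabla u$, not merely the first two terms. Your plan discards $-4|\mathrm{Rc}|^{2}-4\mathrm{Rc}(\nabla u,\nabla u)$ as manifestly nonpositive and then hopes to ``absorb'' the surviving $\Delta R$ and $\nabla R$ terms; but those two terms have no sign and are not controlled at a maximum point of $H$, so this bookkeeping cannot close. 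The working argument keeps the Ricci terms, assembles the whole block into $-2(\ppt R+2\nabla R\cdot\nabla u+2R_{ij}u_{i}u_{j})$ via $\ppt R=\Delta R+2|\mathrm{Rc}|^{2}$, and bounds it above by $\frac{2R}{t}$ using Hamilton's trace Harnack inequality $\ppt R+\frac{R}{t}+2\nabla R\cdot V+2R_{ij}V_{i}V_{j}\geqslant0$ applied with $V=\nabla u$. This is the only place where the weakly positive curvature operator is genuinely used; the inputs you list ($R\geqslant0$, $\mathrm{Rc}\geqslant0$, $|\mathrm{Rc}|^{2}\geqslant R^{2}/n$) do not suffice. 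The resulting $+\frac{2R}{t}$ is then exactly cancelled by the $-\frac{2R}{t}$ cross term that appears when the Cauchy--Schwarz square is recentred at $\Delta u-R-\frac{n}{t}$, after which the rest of your plan goes through as described.
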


\begin{rema}
The result can be generalized to the context of $M$ being
non-compact. In order for the same argument to work, we need to
assume that the Ricci flow solution $g(t)$ is complete with the
curvature and all the covariant derivatives being uniformly
bounded {\em (}in the space direction{\em )}.
\end{rema}

Our next result deals with (\ref{heat3}), which is also a natural
evolution equation to consider with, by previous discussion.

\begin{theorem}
\label{theorem1.2} Let $(M, g(t))$, $t\in [0, T)$, be a solution
to the Ricci flow on a closed manifold, and suppose that $g(0)$
{\em (}and so $g(t)${\em )} has weakly positive curvature
operator. Let $u$ be a smooth solution to {\em (\ref{heat3})},
define
\begin{equation}\label{defH}
H=2 \triangle u-|\nabla u|^2-3R-\frac{2n} {t}.
\end{equation} Then
for all time $t\in (0,T)$
$$H\leqslant 0.$$
\end{theorem}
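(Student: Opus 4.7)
The plan is to apply the parabolic maximum principle to $H$ on $M\times(0,T)$, paralleling the strategy for Theorem \ref{theorem1.1}. Since $M$ is closed and $H\to-\infty$ uniformly as $t\to 0^+$ (because of the $-2n/t$ term), any positive supremum of $H$ must be attained at an interior point $(x_0,t_0)$ where $\nabla H=0$, $\Delta H\le 0$, and $\p_t H\ge 0$, so $(\p_t-\Delta)H\ge 0$ there.

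The first step is to compute $(\p_t-\Delta)H$ using the standard Ricci flow evolution identities $(\p_t-\Delta)(\Delta u)=\Delta(\p_t u-\Delta u)+2\langle Rc,\nabla^2u\rangle$, $(\p_t-\Delta)|\nabla u|^2=2\langle\nabla u,\nabla(\p_t u-\Delta u)\rangle-2|\nabla^2u|^2$, and $(\p_t-\Delta)R=2|Rc|^2$, combined with the substitution $\p_t u-\Delta u=-|\nabla u|^2-R-u/(1+t/2)$ coming from (\ref{heat3}). The weakly positive curvature operator assumption yields $Rc\ge 0$, $R\ge 0$ and $R^2/n\le|Rc|^2\le R^2$, while the algebraic inequality $|\nabla^2u|^2\ge(\Delta u)^2/n$ controls the Hessian. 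The first-order terms should reorganize into a transport term $-2\langle\nabla u,\nabla H\rangle$ modulo a remainder involving $\langle\nabla u,\nabla R\rangle$ that is handled via the contracted Bianchi identity; this transport vanishes at a maximum.

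Next I eliminate $\Delta u$ using the identity $2\Delta u=H+|\nabla u|^2+3R+2n/t$. The new $1/(1+t/2)$ contributions (absent in Theorem \ref{theorem1.1}) produce a term of the form $-(H+3R+2n/t-|\nabla u|^2)/(1+t/2)$, which combines with the $2n/t^2$ from $(\p_t-\Delta)(-2n/t)$ and with the expanded Li-Yau quadratic bound $\tfrac{2}{n}(\Delta u)^2\ge\tfrac{1}{2n}(H+|\nabla u|^2+3R+2n/t)^2$ so as to replace the $n/4$ slack of Theorem \ref{theorem1.1} by zero. Schematically, when $H\ge 0$, the result should read
$$(\p_t-\Delta)H+2\langle\nabla u,\nabla H\rangle\;\le\;-\frac{H^2}{2n(1+t/2)}-\frac{H}{1+t/2}\cdot(\text{nonneg}).$$
At a putative positive maximum the left side is $\ge 0$ while the right side is strictly negative, a contradiction; hence $H\le 0$ on $M\times(0,T)$.

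The main obstacle will be the careful bookkeeping in the computation of $(\p_t-\Delta)H$, in particular verifying that the time factor $1/(1+t/2)$ really does tighten the bound from $n/4$ to $0$ (rather than, say, $n/(4(1+t/2))$). The correctness of the target bound is supported by the observation in the discussion preceding (\ref{heat3}) that on an expanding gradient Ricci soliton the corresponding $H$ vanishes identically, making $H\le 0$ the sharp statement.
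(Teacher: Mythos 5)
Your overall strategy --- a maximum principle for $H$ exploiting $H\to-\infty$ as $t\to 0^+$, driven by a computation of $(\partial_t-\Delta)H$ --- is indeed the paper's strategy, but there is a genuine gap in how you propose to close the evolution inequality. When the computation is carried out, the curvature contributions assemble into the block
$$-2\Bigl(\frac{\partial R}{\partial t}+\frac{R}{t}+2\nabla R\cdot\nabla u+2R_{ij}u_iu_j\Bigr),$$
that is, $-2$ times Hamilton's trace Harnack expression evaluated on the vector field $\nabla u$. The term $2\nabla R\cdot\nabla u$ is \emph{not} absorbed into the transport term $-2\nabla H\cdot\nabla u$ and is not disposed of by the contracted Bianchi identity (Bianchi enters only in the identity $\partial_t(\Delta u)=\Delta(\partial_t u)+2\langle Rc,\nabla^2 u\rangle$); likewise $\partial_t R=\Delta R+2|Rc|^2$ leaves a $-2\Delta R$ with no sign. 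These terms are controlled only by invoking Hamilton's trace Harnack inequality, which asserts that the bracketed quantity is nonnegative --- and this is precisely where the hypothesis of weakly positive curvature \emph{operator} (rather than merely $Rc\geq 0$, $R\geq 0$) is used. The toolkit you list ($Rc\geq 0$, $R^2/n\leq|Rc|^2\leq R^2$, $|\nabla^2 u|^2\geq(\Delta u)^2/n$) cannot produce a sign for $\nabla R\cdot\nabla u$ or $\Delta R$, so the differential inequality you write down does not follow from the steps you describe.

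A secondary point: the quadratic completion you import from Theorem \ref{theorem1.1} is unnecessary here, and the mechanism by which the slack improves from $n/4$ to $0$ is different from what you anticipate. After substituting $-2\Delta u+2|\nabla u|^2=-H+|\nabla u|^2-3R-\frac{2n}{t}$ into the new term $\frac{2}{t+2}\left(-2\Delta u+2|\nabla u|^2\right)$, the coefficient of $|\nabla u|^2$ becomes $\frac{2}{t+2}-\frac{2}{t}<0$ for \emph{all} $t>0$, in contrast to the coefficient $1-\frac{2}{t}$ in Theorem \ref{theorem1.1}, which changes sign at $t=2$ and forces the Li--Yau completion there. Hence, once the trace Harnack inequality is in place, every term on the right-hand side other than $\Delta H-2\nabla H\cdot\nabla u-\left(\frac{2}{t}+\frac{2}{t+2}\right)H$ is already nonpositive, and the maximum principle gives $H\leq 0$ directly, with no need for $|A|^2\geq(\mathrm{tr}\,A)^2/n$ or a completed square. (If you did want to keep the quadratic route, note also that $|\nabla^2 u|^2\geq(\Delta u)^2/n$ alone does not handle the unsigned cross term $4\langle Rc,\nabla^2 u\rangle$; one must work with the full square $|\nabla^2 u-Rc-\frac{1}{t}g|^2$.)
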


\begin{rema}
If $f$ is a positive function such that $f=e^{-u}$, then $f$
satisfies the following evolution equation $$ \frac{\partial
f}{\partial t}=\triangle f+Rf-\frac{f\ln f}{1+\frac{t}{2}}.
$$
\end{rema}
\vspace{0.1in}

In \cite{c08}, the first author studied the conjugate heat
equation under the Ricci flow. In particular, the following
theorem was proved.

\begin{theorem} \label{thmnph} \cite[Theorem 3.6]{c08} Let $(M, g(t))$,
$t\in [0,T]$, be a solution to the
Ricci flow, suppose that $g(t)$ has nonnegative scalar curvature.
Let $f$ be a positive solution of the conjugate heat equation
$$\ppt f=-\triangle f
+Rf,$$ let $v=-\ln f-\frac{n}{2}\ln (4\pi \tau)$, $\tau=T-t$ and
$$P=2 \triangle v-|\nabla v|^2+R-\frac{2n}{\tau}.$$ Then we have
\begin{align}\label{harnack}
\frac{\partial}{\partial \tau} P=&\triangle P-2\nabla P \cdot
\nabla v-2|v_{ij}+R_{ij}-\frac{1}{\tau}g_{ij}|^2
-\frac{2}{\tau}P-2\frac{|\nabla v|^2}{\tau}-2\frac{R}{\tau}.
\end{align}Moreover, for all time $t\in [0,T)$,
$$P\leqslant 0.$$
\end{theorem}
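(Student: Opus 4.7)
\bigskip
\noindent\textbf{Proof proposal.} The plan is to derive the evolution identity (\ref{harnack}) by direct computation under the Ricci flow and then use it to run a maximum principle on a suitable rescaling of $P$.

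First, I would translate the conjugate heat equation into an evolution equation for $v$. Since $v=-\ln f-\frac{n}{2}\ln(4\pi\tau)$ gives $\nabla f=-f\,\nabla v$ and $\triangle f=f(|\nabla v|^2-\triangle v)$, and since $\partial_\tau f=-\partial_t f=\triangle f-Rf$, a short calculation yields
$$\frac{\partial v}{\partial \tau}=\triangle v-|\nabla v|^2+R-\frac{n}{2\tau}.$$
Next I would differentiate each summand of $P$ in $\tau$, being careful with the metric dependence. Under the Ricci flow $\partial_\tau g^{ij}=-2R^{ij}$, $\partial_\tau R=-\triangle R-2|Rc|^2$, and the commutator $\partial_\tau(\triangle v)=\triangle(\partial_\tau v)-2R_{ij}v_{ij}$ holds (the Christoffel contribution cancels by the contracted second Bianchi identity). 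Feeding these into $\partial_\tau P$ and using Bochner's formula
$$\triangle|\nabla v|^2=2|v_{ij}|^2+2\langle \nabla v,\nabla \triangle v\rangle+2R_{ij}v_iv_j,$$
together with the above expression for $\partial_\tau v$, I expect the leading $\triangle$ and transport terms to collapse into $\triangle P-2\nabla P\cdot\nabla v$, leaving a residue of the form $-2|v_{ij}+R_{ij}|^2+2n/\tau^2$. The crucial algebraic step is then to complete the square, rewriting this residue as $-2|v_{ij}+R_{ij}-\frac{1}{\tau}g_{ij}|^2-\frac{2}{\tau}P-\frac{2}{\tau}|\nabla v|^2-\frac{2}{\tau}R$; matching the cross terms produced this way against the definition of $P$ is the main computational obstacle.

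Once (\ref{harnack}) is in hand, the inequality $P\le 0$ follows from a maximum principle. Discarding the manifestly nonpositive square and using $R\ge 0$ yields
$$\frac{\partial P}{\partial \tau}\le \triangle P-2\nabla v\cdot\nabla P-\frac{2}{\tau}P.$$
The coefficient $-2/\tau$ has the wrong sign to close the maximum principle directly on $P$, so I would pass to the rescaled quantity $\tilde P:=\tau^2 P$. A one-line calculation gives
$$\frac{\partial \tilde P}{\partial \tau}\le \triangle \tilde P-2\nabla v\cdot\nabla \tilde P,$$
with no zeroth-order term. Because $\nabla v$, $\triangle v$ and $R$ stay bounded while the $-2n/\tau$ piece drives $P$ to $-\infty$ pointwise, one has $\tilde P\to 0$ uniformly as $\tau\to 0^+$. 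A standard parabolic maximum principle on the compact manifold $M$, applied to the perturbation $\tilde P-\varepsilon\tau$ to rule out degenerate interior maxima and then letting $\varepsilon\to 0$, forces $\tilde P\le 0$, hence $P\le 0$ on $(0,T]$.
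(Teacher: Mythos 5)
Your proposal is essentially sound, but note that this paper does not actually prove Theorem \ref{thmnph}: it is quoted verbatim from \cite[Theorem 3.6]{c08}, and the only computation the present paper offers is in Section 4, where the evolution identity for $P$ is again simply imported ("following the computation as in \cite[Theorem 3.6]{c08}"). Your derivation of that identity has all the right ingredients --- the equation $\partial_\tau v=\triangle v-|\nabla v|^2+R-\tfrac{n}{2\tau}$, the commutator $\partial_\tau(\triangle v)=\triangle(\partial_\tau v)-2R^{ij}v_{ij}$, Bochner's formula, and the completion of the square (which does check out: expanding $-2|v_{ij}+R_{ij}-\tfrac{1}{\tau}g_{ij}|^2$ produces exactly $-2|v_{ij}+R_{ij}|^2+\tfrac{4}{\tau}(\triangle v+R)-\tfrac{2n}{\tau^2}$, and $\tfrac{4}{\tau}(\triangle v+R)=\tfrac{2}{\tau}P+\tfrac{2}{\tau}|\nabla v|^2+\tfrac{2}{\tau}R+\tfrac{4n}{\tau^2}$) --- though you leave the cancellation of the transport terms as an acknowledged "obstacle" rather than carrying it out. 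For the inequality $P\leqslant 0$, your argument is correct, and the rescaling $\tilde P=\tau^2P$ with $\tilde P\to 0$ as $\tau\to 0^+$ is a clean way to handle the singular coefficient; but your stated reason for it is off: at a first interior touching of $P=\varepsilon>0$ the term $-\tfrac{2}{\tau}P$ is strictly negative, so the sign of $-2/\tau$ is actually \emph{favorable} for propagating the upper bound $P\leqslant 0$ directly. Where the paper genuinely diverges from you is Theorem \ref{thm4.1}: instead of discarding the full square and invoking $R\geqslant 0$, it keeps the trace part via $|\nabla^2v+Rc-\tfrac{1}{\tau}g|^2\geqslant\tfrac{1}{n}(\triangle v+R-\tfrac{n}{\tau})^2$ and the identity $P+|\nabla v|^2+R=2(\triangle v+R-\tfrac{n}{\tau})$, absorbing every curvature term into a perfect square. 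That buys monotonicity of $\max_M(2\triangle v-|\nabla v|^2+R)$ with no hypothesis on the scalar curvature, whereas your route (and that of \cite{c08}) needs $R\geqslant 0$ but delivers the pointwise bound $P\leqslant 0$ as stated.
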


 In the last section, we apply a
similar trick as in the proof of  Theorem \ref{theorem1.1} and
obtain a slightly different result, where we no longer need to
assume that
$g(t)$ has nonnegative scalar curvature.\\

{\bf Acknowledgments} Xiaodong Cao wants to thank the organizers
of the conference ``Complex and Differential Geometry" for their
invitation and hospitality. Both authors want to express their
gratitude to East China Normal University, where they started this
discussion.

\section{\textbf{Proof of Theorem \ref{theorem1.1} and
Application}}

The evolution equations of $u$, is very similar to what is
considered in \cite{ch2008}. So the computation for the very
general setting there can be applied. The only difference is now
we have more terms coming from time derivative $\ppt u$.

\begin{proof}[Proof of Theorem \ref{theorem1.1}]
Recall the definition of $H$ from (\ref{defH}), comparing with
\cite[Corollary 2.2]{ch2008}, we have
\begin{align}
\label{harnack}
\ppt H=
&\triangle H-2\nabla H \cdot \nabla u-2|u_{ij}-
R_{ij}-\frac{1}{t} g_{ij}|^2-\frac{2}{t}H-\frac{2}
{t}|\nabla u|^2\\ \nonumber
&- 2\(\ppt R+\frac{R}{t}+2\nabla R \cdot \nabla
u+2 R_{ij}u_iu_j\) -2\Delta u+2|\nabla u|^2,
\end{align}
where the last two terms of the right hand side coming from the
extra term $-u$ in (\ref {heat2}). Plugging in $-2\Delta
u+2|\nabla u|^ 2=-H+|\nabla u|^2-3R-\frac{2n}{t}$, one arrives at
\begin{align}
\ppt H=
&\triangle H-2\nabla H \cdot \nabla u-2|u_{ij}-
R_{ij}-\frac{1}{t}g_{ij}|^2 -\(\frac{2}{t}+1\)H\\
\nonumber
&+\(1-\frac{2}{t}\)|\nabla u|^2-3R-\frac{2n}{t}-
\(\ppt R +\frac{R}{t}+2\nabla R \cdot\nabla u+
2 R_{ij}u_iu_j\).
\end{align}
In sight of the definition of $H$ (\ref{defH}), for $t$ small
enough, we have $H<0$. Since $g_{ij}$ has weakly positive
curvature operator, by the trace Harnack inequality for the Ricci
flow proved by R. Hamilton in \cite{harnack}, we have
$$\ppt R +\frac{R}{t}+2 \nabla R \cdot \nabla
u+2 R_{ij}u_iu_j\geqslant 0.$$ Also we have $R\geqslant 0$. Notice
that the term $\(1-\frac{2}{t}\)|\nabla u|^2$ prevents us from
obtaining an upper bound for $H$ for $t>2$.\\


We can deal with this by the following simple manipulation. To
begin with, one observes that from the definition of $H$,
$$|\nabla u|^2=2\(\Delta u-R-\frac{n}{t}\)-H-R.$$
We also have the following equality from
definition,
$$tr\(u_{ij}-R_{ij}-\frac{1}{t}g_{ij}\)=\Delta u-R-
\frac{n}{t}.$$ Now we can continue the computation for the
evolution of $H$ as follows,
\begin{align*}
\ppt H
\leqslant&\triangle H-2\nabla H \cdot \nabla
u-2|u_{ij}-R_{ij}-\frac{1}{t}g_{ij}|^2 -\(\frac{2}
{t}+1\)H-\frac{2}{t}|\nabla u|^2\\ \nonumber
&-4R+2\(\Delta u-R-\frac{n}{t}\)-H-\frac{2n}
{t}\\ \nonumber
\leqslant&\triangle H-2\nabla H \cdot \nabla
u-\frac{2}{n}\(\Delta u-R-\frac{n}{t}\)^2-\(\frac
{2}{t}+1\)H-\frac{2}{t}|\nabla u|^2\\
&-4R+2\(\Delta u-R-\frac{n}{t}\)-H-\frac{2n}
{t}\\ \nonumber
=&\triangle H-2\nabla H \cdot \nabla u-\(\frac
{2}{t}+2\)H-\frac{2}{t}|\nabla u|^2-4R-\frac{2n}
{t}\\
&-\frac{2}{n}\(\Delta u-R-\frac{n}{t}-\frac{n}
{2}\)^2+\frac{n}{2}\\
\leqslant&\triangle H-2\nabla H \cdot \nabla
u-\(\frac{2}{t}+2\)H-\frac{2}{t}|\nabla u|^2-4R
-\frac{2n}{t}+\frac{n}{2}.
\end{align*}
The essential step is the second inequality where we make use of
the elementary inequality
$$|u_{ij}-R_{ij}-\frac{1}{t}g_{ij}|^2\geqslant
\frac{1}{n}\(\Delta u-R-\frac{n}{t}\)^2.$$

\vspace{0.1in}

Now we can apply maximum principle. The value of $H$ for very
small positive $t$ is clearly very negative. So we only need to
consider the maximum value point is at $t> 0$ for the desired
estimate.\\


For $\forall T_0 < T$, assume that the maximum in $(0, T_0]$ is
taken at $t_0>0$. At the maximum value point, using the
nonnegativity of $|\nabla u|^2$ and $R$, one has
$$H\leqslant\frac{-4n+nt_0}{4+4t_0}=\frac
{n}{4}\(1-\frac{5}{t_0+1}\)\leqslant \frac{n}
{4}\(1-\frac{5}{T+1}\).$$ So if $T\leqslant 4$, i.e., for time in
$[0, 4)$, $H\leqslant 0$. In general, we have
$$H\leqslant\frac{n}{4}.$$

Theorem \ref{theorem1.1} is thus proved.\end{proof}
As a
consequence of Theorem \ref {theorem1.1}, we have

\begin{corollary}
\label{intH} Let $\(M, g(t)\)$, $t\in [0, T)$, be a solution to
the Ricci flow on a closed manifold, and suppose that $g(0)$ {\em
(}and so $g(t)${\em )} has weakly positive curvature operator. Let
$f$ be a positive solution to the heat equation
$$\ppt f=\Delta f -f \ln f+Rf.$$
Assume that $(x_1, t_1)$ and $(x_2, t_2)$,
$0<t_1<t_2$, are two points in $M\times
(0, T)$. Let $$\Gamma=\inf_{\gamma}\int
_{t_1}^{t_2} e^t\(|\dot{\gamma}|^2+R+\frac
{2n}{t}+\frac{n}{4}\)dt,$$ where $\gamma$
is any space-time path joining $(x_1, t_1)$
and $(x_2, t_2)$. Then we have
$$e^{t_1} \ln f(x_1,t_1)\leqslant e^{t_2} \ln
f(x_2, t_2) +\frac{\Gamma}{2}.$$
\end{corollary}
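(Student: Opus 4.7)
The plan is to translate the pointwise Harnack inequality $H\leqslant \frac{n}{4}$ into an upper bound for $\frac{d}{dt}u$ along a space-time path, where $u=-\ln f$, and then integrate against the correct integrating factor. The appearance of the weight $e^t$ in the definition of $\Gamma$ is explained by the linear $-u$ term in the evolution equation (\ref{heat2}), which forces us to work with $e^tu$ instead of $u$ itself.

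First I would rewrite $H$ using (\ref{heat2}). Substituting $\Delta u = \partial_t u+|\nabla u|^2+R+u$ into (\ref{defH}) gives
\begin{equation*}
H = 2\partial_t u+|\nabla u|^2-R+2u-\frac{2n}{t},
\end{equation*}
so Theorem \ref{theorem1.1} is equivalent to
\begin{equation*}
2\partial_t u \leqslant \frac{n}{4}-|\nabla u|^2+R-2u+\frac{2n}{t}.
\end{equation*}
Next, let $\gamma\colon [t_1,t_2]\to M$ be any smooth path joining $x_1$ to $x_2$ and compute
\begin{equation*}
2\frac{d}{dt}u(\gamma(t),t) = 2\partial_t u + 2\nabla u\cdot\dot{\gamma} \leqslant 2\partial_t u + |\nabla u|^2+|\dot{\gamma}|^2,
\end{equation*}
using Cauchy--Schwarz in the form $2\nabla u\cdot\dot{\gamma}\leqslant |\nabla u|^2+|\dot{\gamma}|^2$. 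The crucial cancellation is that the $|\nabla u|^2$ produced by Cauchy--Schwarz exactly cancels the $-|\nabla u|^2$ from the Harnack bound above, leaving
\begin{equation*}
\frac{d}{dt}u(\gamma(t),t)+u \leqslant \frac{1}{2}\left(|\dot{\gamma}|^2+R+\frac{2n}{t}+\frac{n}{4}\right).
\end{equation*}

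Now I would multiply by the integrating factor $e^t$ to rewrite the left-hand side as $\frac{d}{dt}(e^tu(\gamma(t),t))$, and then integrate from $t_1$ to $t_2$ to obtain
\begin{equation*}
e^{t_2}u(x_2,t_2)-e^{t_1}u(x_1,t_1) \leqslant \frac{1}{2}\int_{t_1}^{t_2}e^t\left(|\dot{\gamma}|^2+R+\frac{2n}{t}+\frac{n}{4}\right)dt.
\end{equation*}
Substituting $u=-\ln f$ reverses the sign, and taking the infimum over admissible paths $\gamma$ replaces the integral by $\Gamma$, yielding the desired estimate. Taking the infimum is justified by approximation, since the integrand is smooth for $t>0$ and $\gamma$ can be chosen smooth.

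I do not anticipate a real obstacle: the main thing to get right is to recognize that the $-u$ term in (\ref{heat2}) forces the integrating factor $e^t$, and that one must apply Cauchy--Schwarz with equal weights on $|\nabla u|^2$ and $|\dot{\gamma}|^2$ in order to eliminate the $|\nabla u|^2$ coming from $H$. Once these two choices are made, the estimate is just a one-line integration.
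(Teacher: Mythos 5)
Your proposal is correct and follows essentially the same route as the paper's proof: both convert $H\leqslant \frac{n}{4}$ into the differential inequality $\frac{d}{dt}u+u\leqslant \frac{1}{2}\left(|\dot\gamma|^2+R+\frac{2n}{t}+\frac{n}{4}\right)$ along a space-time path via the equal-weight Cauchy--Schwarz estimate on $\nabla u\cdot\dot\gamma$, and then integrate $e^t u$. The only cosmetic difference is that you eliminate $\Delta u$ from $H$ using the evolution equation before applying the Harnack bound, whereas the paper bounds $\Delta u$ by $H$ and substitutes afterward; the resulting inequality is identical.
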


\vspace{0.1in}

This inequality is in the type of classical Harnack inequalities.
The proof is quite standard by integrating the differential
Harnack inequality.
We include it here for completeness.\\

\begin{proof}
Pick a space-time curve connecting $(x_1, t_1)$ and $(x_2, t_2)$,
$\gamma(t)=\(x(t), t\)$ for $t\in [t_1, t_2]$. Recall that  $u (x,
t)=-\ln f(x, t)$. Using the evolution equation for $u$, we have
\begin{equation}
\begin{split}
\frac{d}{d t}u\(x(t), t\)
&= \frac{\p u}{\p t}+\nabla u\cdot\dot
{\gamma} \\
&\leqslant \Delta u-|\nabla u|^2-R-u+
\nabla u\cdot\dot{\gamma} \\
&\leqslant \Delta u-\frac{|\nabla u|^2}
{2}-R-u+\frac{|\dot{\gamma}|^2}{2}.
\end{split}
\end{equation}
Now by Theorem \ref{theorem1.1}, we have
$$\Delta u=\frac{1}{2}\(H+|\nabla u|^2+3R+
\frac{2n}{t}\)\leqslant \frac{1}{2}\(\frac{n}{4}+ |\nabla
u|^2+3R+\frac{2n}{t}\).$$ So we have the following estimation,
$$\frac{d}{d t}u\(x(t), t\)\leqslant \frac{1}{2}
\(|\dot{\gamma}|^2+R+\frac{2n}{t}+\frac{n} {4}\)-u.$$ For any
space-time curve $\gamma$, we arrives at
$$\frac{d}{d t}(e^t\cdot u)\leqslant \frac{e^t}
{2}\(|\dot{\gamma}|^2+R+\frac{2n}{t}+\frac {n}{4}\).$$

\vspace{0.1in}

Hence the desired Harnack inequality is proved by integrating $t$
from $t_1$ to $t_2$.\end{proof}

\section{\textbf{Proof of Theorem \ref{theorem1.2}}}

In this section we study $u$ satisfying the evolution equation
(\ref{heat3}) originated from gradient expanding Ricci soliton
equation. We investigate the same quantity $$H=2 \triangle u-
|\nabla u|^2-3R-\frac{2n} {t}$$ as in the last section. The
evolution equations of $u$, is still very similar to what is
considered in \cite{ch2008}. We will have slightly different terms
coming from time derivative $\ppt u$ when computing the evolution
equation satisfied by $H$. Comparing with \cite[Corollary
2.2]{ch2008}, we proceed as follows.
\begin{proof}[Proof of Theorem \ref{theorem1.2}]
\begin{align}
\label{harnack} \ppt H= &\triangle H-2\nabla H \cdot \nabla
u-2|u_{ij}- R_{ij}-\frac{1}{t} g_{ij}|^2-\frac{2}{t}H-\frac{2}
{t}|\nabla u|^2\\ \nonumber &- 2\(\ppt R+\frac{R}{t}+2\nabla R
\cdot \nabla u+2 R_{ij}u_iu_j\)+\frac{2}{t+2}\(-2\Delta u+
2|\nabla u|^2\),
\end{align}
where the last two terms of the right hand side come from the
extra term $-\frac{u}{1+\frac{t} {2}}$ in (\ref {heat3}). Plugging
in $-2\Delta u+ 2|\nabla u|^ 2=-H+|\nabla u|^2-3R-\frac{2n}{t}$,
one arrives at
\begin{align}
\ppt H= &\triangle H-2\nabla H \cdot \nabla u-2|u_{ij}-
R_{ij}-\frac{1}{t}g_{ij}|^2 -\(\frac{2}{t}+\frac{2} {t+2}\)H-\frac
{6}{t+2}R\\
\nonumber &+\(\frac{2}{t+2}-\frac{2}{t}\)|\nabla
u|^2-\frac{4n}{t^2+2t}-\(\ppt R +\frac{R} {t}+2\nabla R
\cdot\nabla u+2 R_{ij}u_iu_j\).
\end{align}
In sight of the definition of $H$, for $t$ small enough, we have
$H<0$. Since $g(t)$ has weakly positive curvature operator, by the
trace Harnack inequality for the Ricci flow (\cite{harnack}), we
have
$$\ppt R +\frac{R}{t}+2 \nabla R \cdot \nabla
u+2 R_{ij}u_iu_j\geqslant 0.$$

Notice that now the coefficient for $|\nabla u |^2$ on the right
hand side is $\frac{2}{t+2}- \frac{2}{t}<0$, and we have
$R\geqslant 0$. So one can conclude directly from Maximum
Principle that $H\leqslant 0$. \end{proof}

\section{\textbf{ A Remark on the Conjugate Heat Equation}}

In this section we point out a simple observation for
\cite[Theorem 3.6]{c08}. The assumption on scalar curvature is not
needed below. We follow the original set-up in \cite{c08} here.\\


Over a closed manifold $M\sp n$, $g(t)$, $t\in [0, T]$, is a
solution to the Ricci flow (\ref{rf}); $f(\cdot, t)$ is a positive
solution of the conjugate heat equation
\begin{equation}\label{che}
\frac{\p f}{\p t}=-\Delta f+Rf, \end{equation} where $\Delta$ and
$R$ are Laplacian and scalar curvature with respect to the
evolving metric $g(t)$. Notice that $\int_M f(\cdot, t) d
\mu_{g(t)}$ is a constant along the flow.\\

Set $$v=-\log f-\frac{n\log(4\pi\tau)}{2},$$  where $\tau =T-t$
and define
$$P:=2\Delta v-|\nabla v|^2+R-\frac{2n}{\tau}.$$
Now we can prove the following result which is closely related to
\cite[Theorem 3.6]{c08}.
\begin{theorem}\label{thm4.1} Let $\(M, g\(t\)\)$, $t\in[0,T]$, be a solution to the
Ricci flow on a closed manifold. $f$ is a positive solution to the
conjugate heat equation {\em (\ref{che})}, and $v$ is defines as
above. Then we have
$$\max_M ~(2\Delta v-|\nabla v|^2+R)$$ increases along the Ricci flow.
\end{theorem}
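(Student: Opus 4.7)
The plan is to pass from $P$ to the quantity $Q := 2\Delta v - |\nabla v|^2 + R = P + \frac{2n}{\tau}$, derive its evolution equation from the one stated in Theorem \ref{thmnph}, and then run a spatial maximum-principle argument closely modeled on the proof of Theorem \ref{theorem1.1}. The key observation I would emphasize is that the additional $\frac{2n}{\tau}$ shift is exactly what is needed so that the trace Cauchy--Schwarz inequality absorbs the remaining $R$ and $|\nabla v|^2$ terms into a single non-positive perfect square, eliminating the need for the scalar curvature hypothesis used in \cite[Theorem 3.6]{c08}.

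First, I would translate the evolution equation of Theorem \ref{thmnph} into one for $Q$. Since $P$ and $Q$ differ only by a function of $\tau$, one has $\Delta P = \Delta Q$ and $\nabla P = \nabla Q$; moreover $-\frac{2P}{\tau} = -\frac{2Q}{\tau} + \frac{4n}{\tau^2}$, and differentiating $\frac{2n}{\tau}$ with respect to $\tau$ contributes an additional $-\frac{2n}{\tau^2}$. Combining these gives
\begin{align*}
\frac{\partial Q}{\partial \tau} = &\; \Delta Q - 2\nabla Q \cdot \nabla v - 2\left|v_{ij} + R_{ij} - \tfrac{1}{\tau}g_{ij}\right|^2 \\
&\; - \frac{2Q}{\tau} + \frac{2n}{\tau^2} - \frac{2|\nabla v|^2}{\tau} - \frac{2R}{\tau}.
\end{align*}
Next, I would apply the pointwise trace inequality
$$\left|v_{ij} + R_{ij} - \tfrac{1}{\tau}g_{ij}\right|^2 \;\geq\; \tfrac{1}{n}\left(\Delta v + R - \tfrac{n}{\tau}\right)^2,$$
and set $A := \Delta v + R - \frac{n}{\tau}$. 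The algebraic identity $Q + |\nabla v|^2 + R = 2(\Delta v + R) = 2\bigl(A + \tfrac{n}{\tau}\bigr)$ will then collapse the remaining non-differential terms on the right-hand side to exactly $-\frac{4A}{\tau} - \frac{2n}{\tau^2}$.

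Finally, I would invoke the spatial maximum principle. At a spatial maximum of $Q$ on the closed manifold $M$, one has $\nabla Q = 0$ and $\Delta Q \leq 0$, so the previous step gives
$$\frac{\partial Q}{\partial \tau}\bigg|_{\max} \;\leq\; -\frac{2}{n}A^2 - \frac{4A}{\tau} - \frac{2n}{\tau^2} \;=\; -\frac{2}{n}\left(A + \frac{n}{\tau}\right)^2 \;=\; -\frac{2}{n}(\Delta v + R)^2 \;\leq\; 0.$$
Hence $\max_M Q$ is non-increasing in $\tau$, equivalently non-decreasing in $t$, which is the claim. The main step I expect to require care is the bookkeeping needed to recognize that every residual $|\nabla v|^2$, $R$, and $\frac{1}{\tau^2}$ term fits into the single perfect square above; once this is noticed, the scalar curvature hypothesis from \cite[Theorem 3.6]{c08} is seen to be unnecessary, which is precisely why Theorem \ref{thm4.1} is phrased in terms of $Q$ rather than $P$.
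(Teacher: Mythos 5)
Your proposal is correct and follows essentially the same route as the paper: the same trace inequality $|v_{ij}+R_{ij}-\frac{1}{\tau}g_{ij}|^2\geq\frac{1}{n}(\Delta v+R-\frac{n}{\tau})^2$, the same algebraic identity $Q+|\nabla v|^2+R=2(\Delta v+R)$, and the same perfect square $-\frac{2}{n}(\Delta v+R)^2$ (the paper writes it as $-\frac{1}{2n}(\widetilde P+|\nabla v|^2+R)^2$), followed by the maximum principle. The only difference is cosmetic: you shift from $P$ to $Q=P+\frac{2n}{\tau}$ at the outset, whereas the paper performs the shift after completing the square.
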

\begin{proof}
Following the computation as in \cite[Theorem 3.6]{c08}, one has
$$\frac{\p P}{\p \tau}=\Delta P-2\nabla P\cdot\nabla
v-2|\nabla^2 v+Rc-\frac{1}{\tau}g|^2-\frac{2}{\tau}P-
\frac{2}{\tau}|\nabla v|^2-\frac{2}{\tau}R.$$ Applying the
elementary inequality
$$|\nabla^2 v+Rc-\frac{1}{\tau}g|^2\geqslant
\frac{1}{n}\(\Delta v+R-\frac{n}{\tau}\)^2,$$ and noticing that
$$P+|\nabla v|^2+R=2\(\Delta v+R-\frac{n}{\tau}\),$$
we arrive at
\begin{equation*}
\begin{split}
\frac{\p P}{\p \tau}
&\leqslant \Delta P-2\nabla P\cdot\nabla v-\frac{1}{2n}
(P+|\nabla v|^2+R)^2-\frac{2}{\tau}(P+|\nabla v|^2+R) \\
&= \Delta P-2\nabla P\cdot\nabla v-\frac{1}{2n}\(P+ |\nabla
v|^2+R+\frac{2n}{\tau}\)^2+\frac{2n}{\tau^2}.
\end{split}
\end{equation*}
Thus if we define
$$\widetilde P:=P+\frac{2n}{\tau}=2\Delta v-|\nabla v|^2
+R,$$ we have
$$\frac{\p \widetilde P}{\p \tau}\leqslant \Delta \widetilde
P-2\nabla \widetilde P\cdot\nabla v.$$ Hence $\max_M(2\Delta
v-|\nabla v|^2+R)$ decreases as $\tau$ increases, which means that
it increases as $t$ increases. This concludes the
proof.\end{proof}

\begin{rema}
Notice that we do not need to introduce $\tau$ in {\em Theorem
\ref{thm4.1}}, but we keep the notation here so it is easy to
compare with \cite[Theorem 3.6]{c08}.
\end{rema}

\begin{rema}
{\em Theorem \ref{thm4.1}} and  \cite[Theorem 3.6]{c08} estimate
quantities differ by $\frac{2n}{\tau}$. Here we do not need to
assume nonnegative scalar curvature as in \cite[Theorem 3.6]{c08}.
Moreover, one can also prove this result for complete non-compact
manifolds with proper boundness assumption.
\end{rema}

\bibliographystyle{plain}
\bibliography{bio}
\end{document}